\newtheorem{definition}{Definition}[section]
\newtheorem{proposition}[definition]{Proposition}
\newtheorem{remark}[definition]{Remark}
\newtheorem{theorem}[definition]{Theorem}
\newtheorem{example}[definition]{Example}
\def\v{\varphi}
\def\mf{\mathfrak}
\newcommand{\nat}{\mbox{$\;\natural \;$}}
\newcommand{\trl}{\mbox{${\;}$$\triangleright \hspace{-1.6mm}<$${\;}$}}
\newcommand{\gsl}{\mbox{${\;}$$>\hspace{-1.7mm}\triangleleft$${\;}$}}
\newcommand{\oot}{\overline{\otimes}}
\def\rawo\lonra{\longrightarrow}
\def\ot{\otimes}
\newcommand{\selabel}[1]{\label{se:#1}}
\newenvironment{proof}{{\it Proof.}}{\hfill $ \square $ \vskip 4mm}
\begin{document}
\title{Two-sided crossed products}
\author {Florin Panaite\\
Institute of Mathematics of the 
Romanian Academy\\ 
PO-Box 1-764, RO-014700 Bucharest, Romania\\
e-mail: Florin.Panaite@imar.ro
}
\date{}
\maketitle

\begin{abstract}
Given two associative algebras $A$, $C$ and a linear space $V$ together with some linear maps $R_1$, $R_2$, $R_3$, $E$ satisfying some conditions, we define an associative algebra structure on $A\ot V\ot C$ called a two-sided crossed product. Particular cases of this construction are the 
iterated twisted tensor product of algebras and the two-sided crossed product over a quasi-bialgebra. \\
\textbf{Keywords}: twisted tensor product, Brzezi\'{n}ski crossed product, iterated crossed product, quasi-bialgebra \\
\textbf{MSC2020}: 16S35, 16T99
\end{abstract}

%%%%%%%%%%%%%%%%%%%%%%%%%%%%%%
\section*{Introduction}
%%%%%%%%%%%%%%%%%%%%%%%%%%%%%%
${\;\;\;}$If $A$ and $B$ are associative unital algebras and $R:B\otimes A\rightarrow 
A\otimes B$ is a linear map satisfying certain conditions (such an $R$ is called a twisting map) 
then $A\otimes B$ becomes an associative unital algebra with a multiplication defined 
in terms of $R$ and the multiplications of $A$ and $B$; this algebra structure is denoted 
by $A\otimes _RB$ and called the twisted tensor product of $A$ and $B$ afforded by $R$ 
(cf. \cite{Cap}, \cite{VanDaele}). Twisted tensor products of algebras appear in various algebraic and geometrical contexts (see for instance \cite{gustafson}, \cite{jlpvo}, \cite{ocal} and references therein). One of their nice features is that they can be iterated: given three twisted tensor products 
$A\otimes _{R_1}B$, $B\otimes _{R_2}C$ and $A\otimes _{R_3}C$, it has been proved in \cite{jlpvo}
that 
a sufficient condition for being able to define certain twisting maps 
$T_1:C\otimes (A\otimes _{R_1}B)\rightarrow (A\otimes _{R_1}B)\otimes C$  and 
$T_2:(B\otimes _{R_2}C)\otimes A\rightarrow A\otimes (B\otimes _{R_2}C)$  associated 
to $R_1$, $R_2$, $R_3$ and ensuring that the algebras $A\otimes _{T_2}(B\otimes _{R_2}C)$ 
and $(A\otimes _{R_1}B)\otimes _{T_1}C$ are equal (this algebra is called the iterated 
twisted tensor product), can be given in terms of the maps $R_1$, $R_2$, $R_3$, 
namely, they have to satisfy the braid relation $(id_A\otimes R_2)\circ 
(R_3\otimes id_B)\circ (id_C\otimes R_1)=(R_1\otimes id_C)\circ (id_B\otimes R_3)\circ 
(R_2\otimes id_A)$. 

An important and substantial generalization of twisted tensor products of algebras is the so called Brzezi\'{n}ski crossed product, introduced in \cite{brz} (it contains as well as particular case  the classical Hopf crossed 
product). 
Given an associative unital algebra $A$, a vector space $V$ endowed with a  
distinguished element $1_V$  and two linear maps $\sigma :V\otimes V
\rightarrow A\otimes V$ and $R:V\otimes A\rightarrow A\otimes V$ 
satisfying certain conditions, Brzezi\'{n}ski's crossed product is a certain 
associative unital algebra structure on $A\otimes V$, denoted in what follows by 
$A\otimes _{R, \sigma }V$.  In \cite{panaite} was introduced a mirror version of the Brzezi\'{n}ski crossed product, denoted by 
$W\overline{\otimes}_{P, \nu }D$ 
(where $D$ is an associative algebra, $W$ is a vector space and $P$, $\nu $ are certain 
maps), and it was proved that under certain circumstances Brzezi\'{n}ski crossed products can be iterated, in the following sense: 
 if $W\overline{\otimes}_{P, \nu }D$ 
and $D\otimes _{R, \sigma }V$ are two crossed products and $Q:V\otimes W\rightarrow 
W\otimes D\otimes V$ is a linear map satisfying certain conditions, then one can define 
certain maps $\overline{\sigma }$, $\overline{R}$, $\overline{\nu }$, $\overline{P}$ such 
that we have the crossed products $(W\overline{\otimes}_{P, \nu }D)\otimes _{\overline{R}, 
\overline{\sigma }}V$ and $W\overline{\otimes }_{\overline{P}, \overline{\nu }}
(D\otimes _{R, \sigma }V)$ that are moreover equal as algebras (this algebra structure is called 
the iterated crossed product). Particular cases of this situation are the iterated twisted tensor products of algebras and the two-sided smash product over a quasi-bialgebra. 

The aim of this paper is to introduce a different type of iterating Brzezi\'{n}ski crossed products. Namely, if $A$ and $C$ are associative unital algebras and $V$ is a linear space endowed with a distinguished nonzero element, and we have linear maps 
$R_1:V\ot A\rightarrow A\ot V$, $R_2:C\ot V\rightarrow V\ot C$, $R_3:C\ot A\rightarrow A\ot C$ and $E:V\ot V\rightarrow A\ot V\ot C$ satisfying some conditions, then we can define a certain  Brzezi\'{n}ski crossed product $A\ot _{R, \sigma }(V\ot C)$, a certain mirror version  Brzezi\'{n}ski crossed product $(A\ot V)\overline{\otimes }_{P, \nu }C$ and we have an algebra isomorphism between them given by the trivial identification. This algebra structure is denoted by $A\gsl V\trl C$ and is called the {\em two-sided crossed product} afforded by the maps $R_1$, $R_2$, $R_3$, $E$. This result admits also a converse. Particular cases are the iterated twisted tensor products of algebras, the so called two-sided crossed product over a quasi-bialgebra from  \cite{bpvo}, \cite{hn},  
and also a recent construction from \cite{ma}.
%%%%%%%%%%%%%%%%%%%%%%%%%%%%%%%
\section{Preliminaries}\selabel{1}
%%%%%%%%%%%%%%%%%%%%%%%%%%%%%%%
${\;\;\;\;}$
We work over a commutative field $k$. All algebras, linear spaces
etc. will be over $k$; unadorned $\ot $ means $\ot_k$. By ''algebra'' we 
always mean an associative unital algebra. The multiplication 
of an algebra $A$ is denoted by $\mu _A$ or simply $\mu $ when 
there is no danger of confusion, and we usually denote 
$\mu _A(a\ot a')=aa'$ for all $a, a'\in A$. The unit of $A$ is denoted by $1_A$. 

We recall from \cite{Cap}, \cite{VanDaele} that, given two algebras $A$, $B$ 
and a $k$-linear map $R:B\ot A\rightarrow A\ot B$, with notation 
$R(b\ot a)=a_R\ot b_R$, for $a\in A$, $b\in B$, satisfying the conditions 
$a_R\otimes 1_R=a\otimes 1$, $1_R\otimes b_R=1\otimes b$, 
$(aa')_R\otimes b_R=a_Ra'_r\otimes b_{R_r}$, 
$a_R\otimes (bb')_R=a_{R_r}\otimes b_rb'_R$, 
for all $a, a'\in A$ and $b, b'\in B$ (where $r$ and $R$ are two different indices), 
if we define on $A\ot B$ a new multiplication, by 
$(a\ot b)(a'\ot b')=aa'_R\ot b_Rb'$, then this multiplication is associative 
with unit $1\ot 1$. In this case, the map $R$ is called 
a {\em twisting map} between $A$ and $B$ and the new algebra 
structure on $A\ot B$ is denoted by $A\ot _RB$ and called the 
{\em twisted tensor product} of $A$ and $B$ afforded by the map $R$. 

We recall from \cite{brz} the construction of  
Brzezi\'{n}ski's crossed product:
\begin{proposition} (\cite{brz}) \label{defbrz}
Let $(A, \mu , 1_A)$ be an (associative unital) algebra and $V$ a 
vector space equipped with a distinguished element $1_V\in V$. Then 
the vector space $A\ot V$ is an associative algebra with unit $1_A\ot 1_V$ 
and whose multiplication has the property that $(a\ot 1_V)(b\ot v)=
ab\ot v$, for all $a, b\in A$ and $v\in V$, if and only if there exist 
linear maps $\sigma :V\ot V\rightarrow A\ot V$ and 
$R:V\ot A\rightarrow A\ot V$  satisfying the following conditions:
\begin{eqnarray}
&&R(1_V\ot a)=a\ot 1_V, \;\;\;R(v\ot 1_A)=1_A\ot v, \;\;\;\forall 
\;a\in A, \;v\in V, \label{brz1} \\
&&\sigma (1_V\ot v)=\sigma (v\ot 1_V)=1_A\ot v, \;\;\;\forall 
\;v\in V, \label{brz2} \\
&&R\circ (id_V\ot \mu )=(\mu \ot id_V)\circ (id_A\ot R)\circ (R\ot id_A), 
\label{brz3} \\
&&(\mu \ot id_V)\circ (id_A\ot \sigma )\circ (R\ot id_V)\circ 
(id_V\ot \sigma ) \nonumber \\
&&\;\;\;\;\;\;\;\;\;\;
=(\mu \ot id_V)\circ (id_A\ot \sigma )\circ (\sigma \ot id_V), \label{brz4} \\
&&(\mu \ot id_V)\circ (id_A\ot \sigma )\circ (R\ot id_V)\circ 
(id_V\ot R ) \nonumber \\
&&\;\;\;\;\;\;\;\;\;\;
=(\mu \ot id_V)\circ (id_A\ot R )\circ (\sigma \ot id_A). \label{brz5} 
\end{eqnarray}
If this is the case, the multiplication of $A\ot V$ is given explicitly by
\begin{eqnarray*} 
&&\mu _{A\ot V}=(\mu _2\ot id_V)\circ (id_A\ot id_A\ot \sigma )\circ 
(id_A\ot R\ot id_V),
\end{eqnarray*}
where $\mu _2=\mu \circ (id_A\ot \mu )=\mu \circ (\mu \ot id_A)$. 
We denote by $A\ot _{R, \sigma }V$ this algebra structure and 
call it the {\em crossed product} afforded by the data $(A, V, R, \sigma )$.  
\end{proposition}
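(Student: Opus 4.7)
The plan is to treat the two implications separately. The key observation in both directions is that, under the assumption $(a\ot 1_V)(b\ot v)=ab\ot v$, every element of $A\ot V$ factors as $(a\ot 1_V)(1_A\ot v)$, so the entire multiplication is determined by the two building blocks $(1_A\ot v)(a\ot 1_V)$ and $(1_A\ot v)(1_A\ot w)$. These should become precisely $R(v\ot a)$ and $\sigma (v\ot w)$.

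For the ``only if'' direction, assume the algebra structure is given and define
\[ R(v\ot a):=(1_A\ot v)(a\ot 1_V),\qquad \sigma (v\ot w):=(1_A\ot v)(1_A\ot w). \]
Conditions (\ref{brz1}) and (\ref{brz2}) are immediate from the unitality of $1_A\ot 1_V$ together with the hypothesis $(a\ot 1_V)(b\ot v)=ab\ot v$. The explicit formula for $\mu _{A\ot V}$ follows from a short calculation:
\[ (a\ot v)(b\ot w)=(a\ot 1_V)(1_A\ot v)(b\ot 1_V)(1_A\ot w) =(a\ot 1_V)\,R(v\ot b)\,(1_A\ot w), \]
after which one uses the hypothesis again to absorb the $A$-part of $R$ into the leftmost factor and recognizes the trailing factor as $\sigma $. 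Relations (\ref{brz3}), (\ref{brz4}), (\ref{brz5}) are obtained by applying associativity of the given multiplication to the three triple products $(1_A\ot v)(a\ot 1_V)(a'\ot 1_V)$, $(1_A\ot v)(1_A\ot w)(1_A\ot w')$ and $(1_A\ot v)(1_A\ot w)(a\ot 1_V)$ respectively; computing both parenthesizations with the two building blocks above, each of the three axioms falls out directly.

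For the ``if'' direction, one defines $\mu _{A\ot V}$ by the stated formula. Unitality of $1_A\ot 1_V$ and the distinguished identity $(a\ot 1_V)(b\ot v)=ab\ot v$ are direct consequences of (\ref{brz1}) and (\ref{brz2}), as is the fact that left multiplication by $a\ot 1_V$ acts as $\mu \ot id_V$. The remaining content, and the main obstacle, is the verification of associativity: expanding $((a\ot v)(b\ot w))(c\ot z)$ and $(a\ot v)((b\ot w)(c\ot z))$ according to the formula produces two composites of $R$, $\sigma $ and $\mu $ that have to be matched symbol by symbol. A clean strategy is to start from the left side and first invoke (\ref{brz3}) to split an $R$ evaluated on a product into two successive $R$'s; then use (\ref{brz5}) to commute the inner $\sigma $ coming from $v,w$ past the $R$ coming from $c$; finally apply (\ref{brz4}) to reconcile the remaining composition of two $\sigma $'s with the $\sigma $ coming from the right-side expansion on $w,z$. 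This is purely diagrammatic bookkeeping once a canonical order of rewrites is fixed, but it is exactly the step at which the three structural axioms (\ref{brz3})--(\ref{brz5}) are used simultaneously.
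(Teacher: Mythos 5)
The paper does not actually prove this proposition --- it is recalled verbatim from \cite{brz} (see also \cite{guccione}) --- so there is no internal proof to compare against; judged on its own, your argument is the standard one and is correct in outline. The ``only if'' direction (defining $R(v\ot a)=(1_A\ot v)(a\ot 1_V)$ and $\sigma (v\ot w)=(1_A\ot v)(1_A\ot w)$ and reading (\ref{brz3})--(\ref{brz5}) off the associativity of the three triple products you list) is complete and matches the original source. The only thin spot is the associativity check in the ``if'' direction, which you describe rather than carry out; note that the rewriting really starts on the side $(a\ot v)\bigl((b\ot w)(c\ot z)\bigr)$, where $R$ gets evaluated on a product in $A$ and (\ref{brz3}) must be applied twice before (\ref{brz5}) and (\ref{brz4}) can be invoked, but this is exactly the routine bookkeeping you acknowledge.
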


If  $A\ot _{R, \sigma }V$ is a crossed product, we use the 
Sweedler-type notation $R(v\ot a)=a_R\ot v_R$, $\sigma (v\ot v')=\sigma _1(v, v') 
\ot \sigma _2(v, v')$, 
for all $v, v'\in V$ and $a\in A$. With this notation, the multiplication of 
 $A\ot _{R, \sigma }V$ reads
\begin{eqnarray*}
&&(a\ot v)(a'\ot v')=aa'_R\sigma _1(v_R, v')\ot \sigma _2(v_R, v'), \;\;\;
\forall \;a, a'\in A, \;v, v'\in V.
\end{eqnarray*}

A twisted tensor product is a particular case of a crossed product 
(cf. \cite{guccione}), namely, if $A\ot _RB$ is a twisted tensor product of 
algebras then $A\ot _RB=A\ot _{R, \sigma }B$, where 
$\sigma :B\ot B\rightarrow A\ot B$ 
is given by $\sigma (b\ot b')=1_A\ot bb'$, for all $b, b'\in B$. 

We recall also from \cite{panaite} the mirror version of Brzezi\'{n}ski's crossed product: 
\begin{theorem}(\cite{panaite})\label{mainmirror}
Let $(B, \mu , 1_B)$ be an (associative unital) algebra and 
$W$ a vector space equipped with a distinguished element 
$1_W\in W$. Then the vector space $W\ot B$ is an associative 
algebra with unit $1_W\ot 1_B$ and whose 
multiplication has the property that 
$(w\ot b)(1_W\ot b')=w\ot bb'$, for all $b, b'\in B$ and 
$w\in W$, if and only if there exist linear maps 
$\nu :W\ot W\rightarrow W\ot B$ and 
$P:B\ot W\rightarrow W\ot B$ satisfying the following conditions: 
\begin{eqnarray}
&&P(b\ot 1_W)=1_W\ot b, \;\;\; P(1_B\ot w)=w\ot 1_B, \;\;\;
\forall \; b\in B, \;w\in W, \label{mirtwunit} \\
&&\nu (w\ot 1_W)=\nu (1_W\ot w)=w\ot 1_B, \;\;\;
\forall \;w\in W, \label{mircocunit} \\
&&P\circ (\mu \ot id_W)=(id_W\ot \mu )\circ 
(P\ot id_B)\circ (id_B\ot P), \label{mirtwmap}\\
&&(id_W\ot \mu )\circ (\nu \ot id_B)\circ 
(id_W\ot P)\circ (\nu \ot id_W)\nonumber \\
&&\;\;\;\;\;\;\;\;\;\;
=(id_W\ot \mu )\circ (\nu \ot id_B)\circ (id_W\ot \nu ), 
\label{mir1}\\
&&(id_W\ot \mu )\circ (\nu \ot id_B)\circ (id_W\ot P)
\circ (P\ot id_W)\nonumber \\
&&\;\;\;\;\;\;\;\;\;\;
=(id_W\ot \mu )\circ (P\ot id_B)\circ (id_B\ot \nu ).
\label{mir2}
\end{eqnarray}
If this is the case, the multiplication of $W\ot B$ is given 
explicitely by 
\begin{eqnarray*}
&&\mu _{W\ot B}=(id_W\ot \mu _2)\circ (\nu \ot id_B
\ot id_B)\circ (id_W\ot P\ot id_B), 
\end{eqnarray*}
where $\mu _2=\mu \circ (id_B\ot \mu )=
\mu \circ (\mu \ot id_B)$. We denote by 
$W\overline{\ot}_{P, \nu }B$ this algebra structure 
and call it the {\em crossed product} 
afforded by the data $(W, B, P, \nu )$. 
\end{theorem}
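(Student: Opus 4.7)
The plan is to prove this mirror version directly, following the pattern of Proposition \ref{defbrz} but with the roles of the two tensor factors reversed throughout. Using Sweedler-type notation $P(b\ot w)=w_P\ot b_P$ and $\nu(w\ot w')=\nu_1(w,w')\ot \nu_2(w,w')$, the asserted multiplication reads $(w\ot b)(w'\ot b')=\nu_1(w,w'_P)\ot \nu_2(w,w'_P)\,b_P\,b'$. A slightly slicker alternative would be to reduce the statement to Proposition \ref{defbrz} by passing to opposite algebras and flipping the tensor factors, but the direct approach keeps the conditions \eqref{mirtwunit}--\eqref{mir2} transparent.

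For the ``only if'' implication, assume an associative unital multiplication on $W\ot B$ with unit $1_W\ot 1_B$ and the stated normalization $(w\ot b)(1_W\ot b')=w\ot bb'$. Define $P(b\ot w):=(1_W\ot b)(w\ot 1_B)$ and $\nu(w\ot w'):=(w\ot 1_B)(w'\ot 1_B)$. The unit-type identities \eqref{mirtwunit} and \eqref{mircocunit} drop out from the unit axiom. The condition \eqref{mirtwmap} comes from the associativity of $(1_W\ot b)(1_W\ot b')(w\ot 1_B)$, using the special case $(1_W\ot b)(1_W\ot b')=1_W\ot bb'$ of the normalization. Likewise \eqref{mir1} is extracted from the associativity of $(w\ot 1_B)(w'\ot 1_B)(w''\ot 1_B)$ and \eqref{mir2} from the associativity of $(1_W\ot b)(w\ot 1_B)(w'\ot 1_B)$; in each case one rewrites intermediate products of the form $(u\ot c)(u'\ot 1_B)$ as $(u\ot 1_B)(1_W\ot c)(u'\ot 1_B)$ via the normalization, so as to express everything purely in terms of $P$, $\nu$, and $\mu$.

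For the ``if'' implication, given $P$ and $\nu$ satisfying the five conditions, define $\mu_{W\ot B}$ by the stated formula. The unit axiom and the normalization $(w\ot b)(1_W\ot b')=w\ot bb'$ are then short verifications using only \eqref{mirtwunit} and \eqref{mircocunit}. Associativity is the real content: expanding both $[(w\ot b)(w'\ot b')](w''\ot b'')$ and $(w\ot b)[(w'\ot b')(w''\ot b'')]$ by the formula produces compositions of $P$'s, $\nu$'s, and $\mu$'s which I would rearrange using \eqref{mirtwmap} to push all occurrences of $P$ through multiplications of $B$, leaving a discrepancy that is closed by applying \eqref{mir1} and \eqref{mir2}.

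The main obstacle is the bookkeeping in this associativity verification, which involves three copies of $P$, two of $\nu$, and several copies of $\mu$ on each side. The cleanest way to organize it is to observe that $W\ot B$ is generated by the subspaces $W\ot 1_B$ and $1_W\ot B$ (since $w\ot b=(w\ot 1_B)(1_W\ot b)$ by the normalization), and then reduce associativity to the triple products built from these two types of generators; the three nontrivial cases are handled by \eqref{mirtwmap}, \eqref{mir1}, \eqref{mir2} respectively, and the remaining ones by \eqref{mirtwunit} and \eqref{mircocunit}.
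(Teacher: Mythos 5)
This theorem is recalled from \cite{panaite} and the paper gives no proof of it, so there is nothing in-text to compare your argument against; I can only judge it on its own terms. On those terms it is essentially correct and is the standard argument, namely the exact mirror image of the proof of Proposition~\ref{defbrz}: in the ``only if'' direction your definitions $P(b\ot w)=(1_W\ot b)(w\ot 1_B)$ and $\nu(w\ot w')=(w\ot 1_B)(w'\ot 1_B)$ are the right ones, and the derivations of \eqref{mirtwunit}--\eqref{mir2} from the unit axiom and from associativity of the three triple products $(1_W\ot b)(1_W\ot b')(w\ot 1_B)$, $(w\ot 1_B)(w'\ot 1_B)(w''\ot 1_B)$, $(1_W\ot b)(w\ot 1_B)(w'\ot 1_B)$ go through exactly as you describe, the key rewriting being $(u\ot c)(u'\ot 1_B)=(u\ot 1_B)(1_W\ot c)(u'\ot 1_B)$. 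One caveat on the ``if'' direction: your closing suggestion to \emph{reduce} associativity to triple products of the generators $W\ot 1_B$ and $1_W\ot B$ is not a valid shortcut as stated, because regrouping a general triple product $((w\ot b)(w'\ot b'))(w''\ot b'')$ into products of six generators already presupposes the associativity you are trying to prove; the reduction is circular unless supplemented by a separate argument. Fortunately your primary route --- expand both sides of the associativity identity via the stated formula for $\mu_{W\ot B}$, use \eqref{mirtwmap} to split $P$ applied to a product in $B$ into iterated $P$'s, and then close the gap with \eqref{mir1} and \eqref{mir2} --- is the correct and complete one, so the proposal stands with the generator-reduction remark deleted or downgraded to a mnemonic.
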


If $W\oot _{P, \nu }B$ is a crossed product, we use also the 
Sweedler-type notation $P(b\ot w)=w_P\ot b_P$, $\nu (w\ot w')=
\nu _1(w, w')\ot \nu _2(w, w')$, 
for all $w, w'\in W$ and $b\in B$. With this notation, 
the multiplication of $W\oot _{P, \nu }B$ reads 
\begin{eqnarray*}
&&(w\ot b)(w'\ot b')=\nu _1(w, w'_P)\ot 
\nu _2(w, w'_P)b_Pb', \;\;\;\forall \;b, b'\in B, \;w, w'\in W. 
\end{eqnarray*}

If $A\ot _RB$ is a twisted tensor product product of algebras, 
then $A\ot _RB=A\oot _{R, \nu }B$, where 
$\nu :A\ot A\rightarrow A\ot B$, 
$\nu (a\ot a')=aa'\ot 1_B$ for all $a, a'\in A$. 

%%%%%%%%%%%%%%%%%%%%%%%%%%%%%%%
\section{The definition of the two-sided crossed product}\selabel{2}
%%%%%%%%%%%%%%%%%%%%%%%%%%%%%%%
\setcounter{equation}{0}
%%%%%%%%%%%%%%%%%%%%%%%%%%%%
\begin{theorem}\label{main}
Let $(A, \mu _A,  1_A)$ and $(C, \mu _C, 1_C)$ be two associative unital algebras and $V$ a linear space equipped with a distinguished nonzero element $1_V\in V$. Assume that we are given linear maps (with respective notation) 
\begin{eqnarray*}
&&R_1:V\ot A\rightarrow A\ot V, \;\;\; R_1(v\ot a)=a_{R_1}\ot v_{R_1}=a_{r_1}\ot v_{r_1}, \\
&&R_2:C\ot V\rightarrow V\ot C, \;\;\; R_2(c\ot v)=v_{R_2}\ot c_{R_2}=v_{r_2}\ot c_{r_2}, \\
&&R_3:C\ot A\rightarrow A\ot C,  \;\;\; R_3(c\ot a)=a_{R_3}\ot c_{R_3}=a_{r_3}\ot c_{r_3}, \\
&&E:V\ot V\rightarrow A\ot V\ot C, \;\;\; E(v\ot v')=E_A(v, v')\ot E_V(v, v')\ot E_C(v, v'), 
\end{eqnarray*}
for all $a\in A$, $v, v'\in V$, $c\in C$,  such that the following conditions are satisfied: \\[2mm]
(i) $R_3$ is a twisting map between $A$ and $C$, that is, for all $a, a'\in A$ and $c, c'\in C$:
\begin{eqnarray}
&&R_3(c\ot 1_A)=1_A\ot c, \;\;\; R_3(1_C\ot a)=a\ot 1_C, \label{twR31}\\
&&(aa')_{R_3}\ot c_{R_3}=a_{R_3}a'_{r_3}\ot (c_{R_3})_{r_3}, \label{twR32}\\
&&a_{R_3}\ot (cc')_{R_3}=(a_{R_3})_{r_3}\ot c_{r_3}c'_{R_3}; \label{twR33}
\end{eqnarray}
(ii) $R_1(1_V\ot a)=a\ot 1_V$, $R_1(v\ot 1_A)=1_A\ot v$, for all $a\in A$, $v\in V$; \\[2mm]
(iii) $R_2(c\ot 1_V)=1_V\ot c$, $R_2(1_C\ot v)=v\ot 1_C$, for all $v\in V$, $c\in C$; \\[2mm]
(iv) $E(1_V\ot v)=E(v\ot 1_V)=1_A\ot v\ot 1_C$, for all $v\in V$; \\[2mm]
(v) the following relations are satisfied: 
\begin{eqnarray}
&&R_1\circ (id_V\ot \mu _A)=(\mu _A\ot id_V)\circ (id_A\ot R_1)\circ (R_1\ot id_A), \label{brz3R1} \\
&&R_2\circ (\mu _C\ot id_V)=(id_V\ot \mu _C)\circ (R_2\ot id_C)\circ (id_C\ot R_2), \label{brz3R2} \\
&&(id_A\ot R_2)\circ (R_3\ot id_V)\circ (id_C\ot R_1)=(R_1\ot id_C)\circ (id_V\ot R_3)\circ (R_2\ot id_A), \label{braidV} \\
&&(\mu _A\ot id_V\ot id_C)\circ (id_A\ot E)\circ (R_1\ot id_V)\circ (id_V\ot R_1) \nonumber \\
&&\;\;\;\;\;\;\;\;=(\mu _A\ot id_V\ot id_C)\circ (id_A\ot R_1\ot id_C)\circ (id_A\ot id_V\ot R_3)\circ (E\ot id_A), \label{octoR1} \\
&&(id_A\ot id_V\ot \mu _C)\circ (E\ot id_C)\circ (id_V\ot R_2)\circ (R_2\ot id_V) \nonumber \\
&&\;\;\;\;\;\;\;\; =(id_A\ot id_V\ot \mu _C)\circ (id_A\ot R_2\ot id_C)\circ (R_3\ot id_V\ot id_C)\circ (id_C\ot E), \label{octoR2} \\
&&(\mu_A\ot id_V\ot \mu _C)\circ (id_A\ot E\ot id_C)\circ (R_1\ot id_V\ot id_C)\circ (id_V\ot E)\nonumber \\
&&\;\;\;\;\;\;\;\; =(\mu _A\ot id_V\ot \mu _C)\circ (id_A\ot E\ot id_C)\circ (id_A\ot id_V\ot R_2)\circ (E\ot id_V). \label{octoR1R2}
\end{eqnarray}
Define the linear maps 
\begin{eqnarray*}
&&R:(V\ot C)\ot A\rightarrow A\ot (V\ot C), \;\;\; R=(R_1\ot id_C)\circ (id_V\ot R_3), \\
&&P:C\ot (A\ot V)\rightarrow (A\ot V)\ot C, \;\;\; P=(id_A\ot R_2)\circ (R_3\ot id_V), \\
&&\sigma :(V\ot C)\ot (V\ot C)\rightarrow A\ot (V\ot C), \\
&& \sigma =(id_A\ot id_V\ot \mu _C)\circ (E\ot \mu _C)\circ (id_V\ot R_2\ot id_C), \\
&&\nu :(A\ot V)\ot (A\ot V)\rightarrow (A\ot V)\ot C, \\
&&\nu =(\mu _A\ot id_V\ot id_C)\circ (\mu _A\ot E)\circ (id_A\ot R_1\ot id_V), 
\end{eqnarray*}
that is 
\begin{eqnarray*}
&&R((v\ot c)\ot a)=(a_{R_3})_{R_1}\ot (v_{R_1}\ot c_{R_3}), \\
&&P(c\ot (a\ot v))=(a_{R_3}\ot v_{R_2})\ot (c_{R_3})_{R_2}, \\
&&\sigma ((v\ot c)\ot (v'\ot c'))=E_A(v, v'_{R_2}) \ot (E_V(v, v'_{R_2})\ot E_C(v, v'_{R_2})c_{R_2}c'), \\
&&\nu ((a\ot v)\ot (a'\ot v'))=(aa'_{R_1}E_A(v_{R_1}, v')\ot E_V(v_{R_1}, v'))\ot E_C(v_{R_1}, v'), 
\end{eqnarray*}
for all $a, a'\in A$, $v, v'\in V$, $c, c'\in C$. Then we have the crossed products $A\ot _{R, \sigma }(V\ot C)$ and 
$(A\ot V)\overline{\otimes }_{P, \nu }C$ and an algebra isomorphism between them given by the trivial identification.  

This associative unital algebra structure on $A\ot V\ot C$ will be denoted by 
$A\gsl V\trl C$ and will be called a {\em two-sided crossed product} (afforded by the maps $R_1$, $R_2$, $R_3$, $E$). Its unit is $1_A\ot 1_V\ot 1_C$ and its multiplication is (for all $a, a'\in A$, $v, v'\in V$, $c, c'\in C$)
\begin{eqnarray*}
&&(a\ot v\ot c)(a'\ot v'\ot c')=a(a'_{R_3})_{R_1}E_A(v_{R_1}, v'_{R_2})\ot E_V(v_{R_1}, v'_{R_2}) \ot
E_C(v_{R_1}, v'_{R_2})(c_{R_3})_{R_2}c'.
\end{eqnarray*}
Sometimes we will denote $a\gsl v\trl c$ instead of $a\ot v\ot c$. 
\end{theorem}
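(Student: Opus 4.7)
The plan is to verify that both claimed crossed products exist and then to check that they induce the same multiplication on $A\ot V\ot C$, so that the trivial identification is automatically an algebra isomorphism. Concretely, I would: (a) verify that the quadruple $(A, V\ot C, R, \sigma)$ together with the distinguished element $1_V\ot 1_C$ satisfies the hypotheses of Proposition~\ref{defbrz}; (b) verify that the quadruple $(A\ot V, C, P, \nu)$ together with the distinguished element $1_A\ot 1_V$ satisfies the hypotheses of Theorem~\ref{mainmirror}; and (c) substitute the definitions of $R,\sigma$ and $P,\nu$ into the two explicit multiplication formulas provided by those results and check that both produce the displayed formula.

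For (a), the unit conditions (\ref{brz1}) and (\ref{brz2}) are immediate from the normalisations (ii)--(iv) together with (\ref{twR31}). The substantive conditions are (\ref{brz3})--(\ref{brz5}). Each is verified by expanding the compositions, inserting the factorisations $R=(R_1\ot id_C)\circ(id_V\ot R_3)$ and the formula for $\sigma$, and applying the hypotheses: (\ref{brz3}) decomposes into (\ref{brz3R1}) for $R_1$ combined with (\ref{twR32}) for $R_3$; (\ref{brz4}) uses the coherence identity (\ref{octoR1R2}) for $E$ together with (\ref{brz3R2}) to reassemble the $C$-multiplications that come out of $\sigma$; (\ref{brz5}) uses (\ref{octoR1}) together with a preparatory application of the braid relation (\ref{braidV}) to thread $R_1$ past $E$. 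In each case the relevant hypothesis is invoked once, after an unambiguous Sweedler-notation rearrangement.

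Part (b) is the mirror analogue: the roles of $A, C$ and of $R_1, R_2$ are swapped. The twisting condition (\ref{mirtwmap}) for $P$ follows from (\ref{brz3R2}) combined with (\ref{twR33}); condition (\ref{mir1}) follows from (\ref{octoR1R2}) and (\ref{brz3R1}); condition (\ref{mir2}) follows from (\ref{octoR2}) and (\ref{braidV}). For part (c), direct substitution gives that both multiplications on $A\ot V\ot C$ equal
\begin{equation*}
a(a'_{R_3})_{R_1}E_A(v_{R_1}, v'_{R_2})\ot E_V(v_{R_1}, v'_{R_2})\ot E_C(v_{R_1}, v'_{R_2})(c_{R_3})_{R_2}c',
\end{equation*}
so the identification is tautologically multiplicative and sends unit to unit.

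\textbf{Main obstacle.} The main difficulty is the bookkeeping in (a) and (b): each of the conditions (\ref{brz3})--(\ref{brz5}) and (\ref{mirtwmap})--(\ref{mir2}) expands into a composition of six or seven maps acting on a tensor product of five or more factors, and the Sweedler-type dummy indices have to be relabelled carefully before the correct hypothesis can be identified. The most intricate case is (\ref{brz5}): two nested applications of $R$, each of which in turn involves $R_1$ and $R_3$, have to be interleaved with a single $\sigma$. A preparatory use of the braid relation (\ref{braidV}) is essential to move $R_3$ past $R_1$ so that (\ref{octoR1}) can then be applied to reconcile the resulting double $R_1$ with the single $R_1$ on the other side of the equation. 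Once this alignment is carried out, the rest of the verification is a straightforward sequence of substitutions using the explicitly labelled hypotheses.
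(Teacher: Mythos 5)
Your strategy is exactly the paper's: verify the Brzezi\'{n}ski axioms for $(A, V\ot C, R, \sigma)$, verify the mirror axioms for $(A\ot V, C, P, \nu)$ (the paper actually only writes out the first and leaves the second as "similar"), and then observe that both explicit multiplication formulas reduce to the displayed one, so the identification is tautologically an algebra map. The outline is sound, but your accounting of which hypotheses prove which axiom is incomplete, and a verification attempted with only the identities you list would not close. For (\ref{brz4}), the inner $\sigma$ produces $E_A(v', v''_{R_2})$ in the $A$-slot, and the outer $R$ then acts on it by $R_3$ before $R_1$; to commute that $R_3$ past $E$ you need (\ref{octoR2}) in addition to (\ref{octoR1R2}) and (\ref{brz3R2}) (the paper applies the element forms of (\ref{octoR2}), then (\ref{octoR1R2}), then (\ref{brz3R2}), in that order). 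Dually, (\ref{mir1}) needs (\ref{octoR1}) as well, not just (\ref{octoR1R2}) and (\ref{brz3R1}). For (\ref{brz5}), after the braid relation (\ref{braidV}) and then (\ref{octoR1}) you are left with three separate $R_3$-indices on $a'$, paired respectively with $E_C(v, v'_{R_2})$, $c_{R_2}$ and $c'$, and you must still invoke (\ref{twR33}) to recombine them into a single $R_3$ applied to the product $E_C(v, v'_{R_2})c_{R_2}c'$; the mirror condition (\ref{mir2}) likewise needs (\ref{twR32}). None of this changes the structure of the argument --- every identity required is among the stated hypotheses --- but the claim that the relevant hypothesis is invoked once per axiom is not accurate: each of (\ref{brz4}), (\ref{brz5}), (\ref{mir1}), (\ref{mir2}) consumes three of the hypotheses, and it is precisely this matching that constitutes the content of the theorem.
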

\begin{proof}
Note first that the relations (\ref{brz3R1})-(\ref{octoR1R2}) are respectively equivalent (by writing them down on elements) to the following relations (for all $a, a'\in A$, $v, v', v''\in V$, $c, c'\in C$):
\begin{eqnarray}
&&(aa')_{R_1}\ot v_{R_1}=a_{R_1}a'_{r_1}\ot (v_{R_1})_{r_1},  \label{equiv1} \\
&&v_{R_2}\ot (cc')_{R_2}=(v_{R_2})_{r_2}\ot c_{r_2}c'_{R_2}, \label{equiv2} \\
&&(a_{R_1})_{R_3}\ot (v_{R_1})_{R_2}\ot (c_{R_3})_{R_2}=(a_{R_3})_{R_1}\ot (v_{R_2})_{R_1}\ot (c_{R_2})_{R_3}, 
\label{equiv3} \\
&&(a_{R_1})_{r_1}E_A(v_{r_1}, v'_{R_1})\ot E_V(v_{r_1}, v'_{R_1})\ot E_C(v_{r_1}, v'_{R_1}) \nonumber \\
&&\;\;\;\;\;\;\;\; =E_A(v, v')(a_{R_3})_{R_1}\ot E_V(v, v')_{R_1}\ot E_C(v, v')_{R_3}, \label{equiv4} \\
&&E_A(v_{R_2}, v'_{r_2})\ot E_V(v_{R_2}, v'_{r_2})\ot E_C(v_{R_2}, v'_{r_2})(c_{R_2})_{r_2} \nonumber \\
&&\;\;\;\;\;\;\;\; =E_A(v, v')_{R_3}\ot E_V(v, v')_{R_2}\ot (c_{R_3})_{R_2}E_C(v, v'), \label{equiv5} \\
&&E_A(v', v'')_{R_1}E_A(v_{R_1}, E_V(v', v''))
\ot E_V(v_{R_1}, E_V(v', v'')) \nonumber \\
&&\;\;\;\;\;\;\;\;\;\;\;\;\;\;\;\;\;\;\;\ot E_C(v_{R_1}, E_V(v', v''))
E_C(v', v'') \nonumber \\
&&\;\;\;\;\;\;\;\; =E_A(v, v')E_A(E_V(v, v'), v''_{R_2})\ot E_V(E_V(v, v'), v''_{R_2}) \nonumber \\
&&\;\;\;\;\;\;\;\;\;\;\;\;\;\;\;\;\;\;\;  \ot  E_C(E_V(v, v'), v''_{R_2})E_C(v, v')_{R_2}. \label{equiv6}
\end{eqnarray}

We prove now that we have the crossed product $A\ot _{R, \sigma }(V\ot C)$, the proof for 
$(A\ot V)\overline{\otimes }_{P, \nu }C$ is similar and left to the reader. The conditions (\ref{brz1}) and (\ref{brz2}) are very easy to prove, so we need to prove (\ref{brz3})-(\ref{brz5}). We denote by $R_2=r_2=\mathcal{R}_2=\overline{R}_2$ and $R_3=r_3=\overline{R}_3$ some more copies of $R_2$ and $R_3$. \\[2mm]
\underline{Proof of (\ref{brz3})}:\\[2mm]
${\;\;\;\;}$$(\mu _A\ot id_V\ot id_C)\circ (id_A\ot R)\circ (R\ot id_A)(v\ot c\ot a\ot a')$
\begin{eqnarray*}
&=&(\mu _A\ot id_V\ot id_C)\circ (id_A\ot R)((a_{R_3})_{R_1}\ot v_{R_1}\ot c_{R_3}\ot a')\\
&=&(a_{R_3})_{R_1}(a'_{R_3})_{r_1}\ot (v_{R_1})_{r_1}\ot (c_{R_3})_{r_3}\\
&\overset{(\ref{equiv1})}{=}&(a_{R_3}a'_{r_3})_{R_1}\ot v_{R_1}\ot (c_{R_3})_{r_3}\\
&\overset{(\ref{twR32})}{=}&((aa')_{R_3})_{R_1}\ot v_{R_1}\ot c_{R_3}=R(v\ot c\ot aa')\\
&=&R\circ (id_V\ot id_C\ot \mu _A)(v\ot c\ot a\ot a'), \;\;\; q.e.d.
\end{eqnarray*}
\underline{Proof of (\ref{brz4})}:\\[2mm]
$(\mu _A\ot id_V\ot id_C)\circ (id_A\ot \sigma )\circ (R\ot id_V\ot id_C)\circ (id_V\ot id_C\ot \sigma )
(v\ot c\ot v'\ot c'\ot v''\ot c'')$
\begin{eqnarray*}
&=&(\mu _A\ot id_V\ot id_C)\circ (id_A\ot \sigma )\circ (R\ot id_V\ot id_C)(v\ot c\ot E_A(v', v''_{R_2})\\
&&\ot E_V(v', v''_{R_2})
\ot E_C(v', v''_{R_2})c'_{R_2}c'')\\
&=&(\mu _A\ot id_V\ot id_C)\circ (id_A\ot \sigma )((E_A(v', v''_{R_2})_{R_3})_{R_1}\ot v_{R_1}\ot c_{R_3}\\
&&\ot 
E_V(v', v''_{R_2})\ot E_C(v', v''_{R_2})c'_{R_2}c'')\\
&=&(E_A(v', v''_{R_2})_{R_3})_{R_1}E_A(v_{R_1}, E_V(v', v''_{R_2})_{r_2})\ot E_V(v_{R_1}, E_V(v', v''_{R_2})_{r_2})\\
&&\ot E_C(v_{R_1}, E_V(v', v''_{R_2})_{r_2})(c_{R_3})_{r_2}E_C(v', v''_{R_2})c'_{R_2}c''\\
&\overset{(\ref{equiv5})}{=}&E_A(v'_{\mathcal{R}_2}, (v''_{R_2})_{\overline{R}_2})_{R_1}
E_A(v_{R_1}, E_V(v'_{\mathcal{R}_2}, (v''_{R_2})_{\overline{R}_2}))\ot 
E_V(v_{R_1}, E_V(v'_{\mathcal{R}_2}, (v''_{R_2})_{\overline{R}_2}))\\
&& \ot E_C(v_{R_1}, E_V(v'_{\mathcal{R}_2}, (v''_{R_2})_{\overline{R}_2}))E_C(v'_{\mathcal{R}_2}, (v''_{R_2})_{\overline{R}_2})(c_{\mathcal{R}_2})_{\overline{R}_2}c'_{R_2}c''\\
&\overset{(\ref{equiv6})}{=}&E_A(v, v'_{\mathcal{R}_2})E_A(E_V(v, v'_{\mathcal{R}_2}), ((v''_{R_2})_{\overline{R}_2})_{r_2})
\ot E_V(E_V(v, v'_{\mathcal{R}_2}), ((v''_{R_2})_{\overline{R}_2})_{r_2})\ot \\
&&E_C(E_V(v, v'_{\mathcal{R}_2}), ((v''_{R_2})_{\overline{R}_2})_{r_2})E_C(v, v'_{\mathcal{R}_2})_{r_2}
(c_{\mathcal{R}_2})_{\overline{R}_2}c'_{R_2}c''\\
&\overset{(\ref{equiv2})}{=}&E_A(v, v'_{\mathcal{R}_2})E_A(E_V(v, v'_{\mathcal{R}_2}), v''_{R_2})
\ot E_V(E_V(v, v'_{\mathcal{R}_2}), v''_{R_2})\ot \\
&&E_C(E_V(v, v'_{\mathcal{R}_2}), v''_{R_2})\{E_C(v, v'_{\mathcal{R}_2})
c_{\mathcal{R}_2}c'\}_{R_2}c''\\
&=&(\mu _A\ot id_V\ot id_C)\circ (id_A\ot \sigma )\circ (\sigma \ot id_V\ot id_C) (v\ot c\ot v'\ot c'\ot v''\ot c''), \;\;\; q.e.d.
\end{eqnarray*}
\underline{Proof of (\ref{brz5})}:\\[2mm]
${\;\;\;}$$(\mu _A\ot id_V\ot id_C)\circ (id_A\ot \sigma )\circ (R\ot id_V\ot id_C)\circ (id_V\ot id_C\ot R)(v\ot c\ot v'\ot c'\ot a')$
\begin{eqnarray*}
&=&(\mu _A\ot id_V\ot id_C)\circ (id_A\ot \sigma )((((a'_{R_3})_{R_1})_{r_3})_{r_1}\ot v_{r_1}\ot c_{r_3}\ot v'_{R_1}\ot c'_{R_3})\\
&=&(((a'_{R_3})_{R_1})_{r_3})_{r_1}E_A(v_{r_1}, (v'_{R_1})_{R_2})\ot E_V(v_{r_1}, (v'_{R_1})_{R_2})\ot 
E_C(v_{r_1}, (v'_{R_1})_{R_2})(c_{r_3})_{R_2}c'_{R_3}\\
&\overset{(\ref{equiv3})}{=}&(((a'_{R_3})_{r_3})_{R_1})_{r_1}E_A(v_{r_1}, (v'_{R_2})_{R_1})\ot E_V(v_{r_1}, (v'_{R_2})_{R_1})\ot 
E_C(v_{r_1}, (v'_{R_2})_{R_1})(c_{R_2})_{r_3}c'_{R_3}\\
&\overset{(\ref{equiv4})}{=}&E_A(v, v'_{R_2})
(((a'_{R_3})_{r_3})_{\overline{R}_3})_{R_1}\ot E_V(v, v'_{R_2})_{R_1}\ot E_C(v, v'_{R_2})_{\overline{R}_3}
(c_{R_2})_{r_3}c'_{R_3}\\
&\overset{(\ref{twR33})}{=}&E_A(v, v'_{R_2})(a'_{R_3})_{R_1}\ot  E_V(v, v'_{R_2})_{R_1}\ot 
\{E_C(v, v'_{R_2})
c_{R_2}c'\}_{R_3}\\
&=&(\mu _A\ot id_V\ot id_C)\circ (id_A\ot R)\circ (\sigma \ot id_A)(v\ot c\ot v'\ot c'\ot a'), \;\;\; q.e.d.
\end{eqnarray*}

The only thing left to prove is that the multiplications of the two crossed products coincide. The multiplication of 
$A\ot _{R, \sigma }(V\ot C)$ reads:\\[2mm]
${\;\;\;\;\;}$
$(a\ot (v\ot c))(a'\ot (v'\ot c'))$
\begin{eqnarray*}
&=&aa'_{R}\sigma _1((v\ot c)_R, v'\ot c')\ot \sigma _2((v\ot c)_R, v'\ot c')\\
&=&a(a'_{R_3})_{R_1}\sigma _1(v_{R_1}\ot c_{R_3}, v'\ot c')\ot \sigma _2(v_{R_1}\ot c_{R_3}, v'\ot c')\\
&=&a(a'_{R_3})_{R_1}E_A(v_{R_1}, v'_{R_2})\ot E_V(v_{R_1}, v'_{R_2})\ot E_C(v_{R_1}, v'_{R_2})
(c_{R_3})_{R_2}c'. 
\end{eqnarray*}
The multiplication of $(A\ot V)\overline{\otimes }_{P, \nu }C$ reads: \\[2mm]
${\;\;\;\;\;}$$((a\ot v)\ot c)((a'\ot v')\ot c')$
\begin{eqnarray*}
&=&\nu _1(a\ot v, (a'\ot v')_P)\ot \nu _2(a\ot v, (a'\ot v')_P)c_Pc'\\
&=&\nu _1(a\ot v, a'_{R_3}\ot v'_{R_2})\ot \nu _2(a\ot v,  a'_{R_3}\ot v'_{R_2})(c_{R_3})_{R_2}c'\\
&=&a(a'_{R_3})_{R_1}E_A(v_{R_1}, v'_{R_2})\ot E_V(v_{R_1}, v'_{R_2})\ot E_C(v_{R_1}, v'_{R_2})
(c_{R_3})_{R_2}c',
\end{eqnarray*}
and we can see that the two multiplications are defined by the same formula. 
\end{proof}

Theorem \ref{main} admits the following converse. 
\begin{theorem}\label{converse}
Let $(A, \mu _A,  1_A)$ and $(C, \mu _C, 1_C)$ be two associative unital algebras and $V$ a linear space equipped with a distinguished nonzero element $1_V\in V$. Assume that on $A\ot V\ot C$ we have an associative algebra structure (with multiplication denoted by $\cdot $) with unit $1_A\ot 1_V\ot 1_C$ such that the maps 
\begin{eqnarray*}
&&A\rightarrow A\ot V\ot C, \;\;\; a\mapsto a\ot 1_V\ot 1_C, \\
&&C\rightarrow A\ot V\ot C, \;\;\; c\mapsto 1_A\ot 1_V\ot c
\end{eqnarray*}
are algebra maps, and moreover 
\begin{eqnarray}
&&(1_A\ot v\ot 1_C)\cdot (a'\ot 1_V\ot 1_C)=\sum _ia_i\ot v_i\ot 1_C, \label{ajut1} \\
&&(1_A\ot 1_V\ot c)\cdot (1_A\ot v'\ot 1_C)=\sum _j1_A\ot v_j\ot c_j, \label{ajut2} \\
&&(1_A\ot 1_V\ot c)\cdot (a'\ot 1_V\ot 1_C)=\sum _la_l\ot 1_V\ot c_l, \label{ajut3} \\
&&a\ot v\ot c=(a\ot 1_V\ot 1_C)\cdot (1_A\ot v\ot 1_C)\cdot (1_A\ot 1_V\ot c), \label{ajut4}
\end{eqnarray}
for all $a, a'\in A$, $v, v'\in V$, $c, c'\in C$, where $a_i, a_l\in A$, $v_i, v_j\in V$, $c_j, c_l\in C$ are some elements. Then there exist linear maps $R_1$, $R_2$, $R_3$, $E$ satisfying the conditions in Theorem \ref{main} and the given algebra structure 
on $A\ot V\ot C$ coincides with the two-sided crossed product $A\gsl V\trl C$  afforded by the maps $R_1$, $R_2$, $R_3$, $E$.
\end{theorem}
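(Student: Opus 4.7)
The plan is to read off the maps $R_1$, $R_2$, $R_3$, $E$ directly from the given multiplication on elementary tensors. By hypothesis (\ref{ajut1}), the product $(1_A\ot v\ot 1_C)\cdot (a\ot 1_V\ot 1_C)$ lies in $A\ot V\ot k\cdot 1_C$, so I define $R_1:V\ot A\ra A\ot V$ uniquely by $(1_A\ot v\ot 1_C)\cdot (a\ot 1_V\ot 1_C)=a_{R_1}\ot v_{R_1}\ot 1_C$. Similarly (\ref{ajut2}) gives $R_2:C\ot V\ra V\ot C$ via $(1_A\ot 1_V\ot c)\cdot (1_A\ot v\ot 1_C)=1_A\ot v_{R_2}\ot c_{R_2}$, and (\ref{ajut3}) gives $R_3:C\ot A\ra A\ot C$ via $(1_A\ot 1_V\ot c)\cdot (a\ot 1_V\ot 1_C)=a_{R_3}\ot 1_V\ot c_{R_3}$. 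Finally, I set $E(v\ot v'):=(1_A\ot v\ot 1_C)\cdot (1_A\ot v'\ot 1_C)\in A\ot V\ot C$, with notation as in the statement of \thref{main}.

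The unit normalizations (ii)--(iv) follow immediately because $1_A\ot 1_V\ot 1_C$ is the unit of $\cdot$. Condition (i) that $R_3$ be a twisting map is a direct consequence of the hypothesis that $a\mapsto a\ot 1_V\ot 1_C$ and $c\mapsto 1_A\ot 1_V\ot c$ are algebra maps: (\ref{twR31}) comes from unitality, while (\ref{twR32}) and (\ref{twR33}) come from expanding the triple products $(1_A\ot 1_V\ot c)\cdot (a\ot 1_V\ot 1_C)\cdot (a'\ot 1_V\ot 1_C)$ and $(1_A\ot 1_V\ot c)\cdot (1_A\ot 1_V\ot c')\cdot (a\ot 1_V\ot 1_C)$ in the two possible ways allowed by associativity. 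Each of the remaining axioms translates into associativity of a carefully chosen triple product of elementary tensors of the form $1\ot ?\ot 1$: relations (\ref{brz3R1}) and (\ref{brz3R2}) follow from associativity of $(1_A\ot v\ot 1_C)\cdot (a\ot 1_V\ot 1_C)\cdot (a'\ot 1_V\ot 1_C)$ and $(1_A\ot 1_V\ot c)\cdot (1_A\ot 1_V\ot c')\cdot (1_A\ot v\ot 1_C)$ respectively; the braid relation (\ref{braidV}) follows from associativity of $(1_A\ot 1_V\ot c)\cdot (1_A\ot v\ot 1_C)\cdot (a\ot 1_V\ot 1_C)$; and the three octahedral-type relations (\ref{octoR1}), (\ref{octoR2}), (\ref{octoR1R2}) correspond to the triple products $(1_A\ot v\ot 1_C)\cdot (1_A\ot v'\ot 1_C)\cdot (a\ot 1_V\ot 1_C)$, $(1_A\ot 1_V\ot c)\cdot (1_A\ot v\ot 1_C)\cdot (1_A\ot v'\ot 1_C)$, and $(1_A\ot v\ot 1_C)\cdot (1_A\ot v'\ot 1_C)\cdot (1_A\ot v''\ot 1_C)$, each computed in two ways.

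To finish, I invoke (\ref{ajut4}) to factor $a\ot v\ot c=(a\ot 1_V\ot 1_C)\cdot (1_A\ot v\ot 1_C)\cdot (1_A\ot 1_V\ot c)$ and then compute a general product $(a\ot v\ot c)\cdot (a'\ot v'\ot c')$ by regrouping via associativity and replacing the inner blocks using the defining equations for $R_3$ (to push $c$ past $a'$), then $R_2$ (to push $c_{R_3}$ past $v'$), then $R_1$ (to push $v$ past $a'_{R_3}$), and finally $E$ (on the adjacent $v$--$v'$ pair); the resulting expression matches term by term the two-sided crossed product formula at the end of \thref{main}. The main obstacle is purely combinatorial: bookkeeping the interleaving indices $R_1, R_2, R_3$ and their secondary copies when expanding quadruple products, and in particular verifying (\ref{octoR1R2}), where two instances of $E$ appear and associativity must be applied twice with careful re-labeling; however, since every axiom has been set up as an associativity identity for an explicit triple product, the verifications are algorithmic rather than conceptually difficult, and no new idea beyond the decomposition (\ref{ajut4}) is needed.
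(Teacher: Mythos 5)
Your proposal is correct and follows essentially the same route as the paper: you extract $R_1$, $R_2$, $R_3$, $E$ from the products of elementary tensors via (\ref{ajut1})--(\ref{ajut3}), derive each axiom of Theorem \ref{main} (in its element-wise form (\ref{equiv1})--(\ref{equiv6})) from the associativity of exactly the same triple products the paper uses, and recover the multiplication formula by factoring through (\ref{ajut4}). The only cosmetic difference is that the paper streamlines the bookkeeping by writing $a\ot v\ot c=a\cdot v\cdot c$ and computing inside $(A\ot V\ot C,\cdot)$ throughout.
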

\begin{proof}
We regard the linear spaces $A$, $V$, $C$, $A\ot V$, $V\ot C$, $A\ot C$ embedded canonically into $A\ot V\ot C$, so we can 
regard elements $a\in A$, $v\in V$, $c\in C$ as elements in $A\ot V\ot C$; by (\ref{ajut4}) we can write 
$a\ot v\ot c=a\cdot v\cdot c$, for all $a\in A$, $v\in V$, $c\in C$. We define the linear map $E:V\ot V\rightarrow A\ot V\ot C$ by 
$E(v\ot v')=(1_A\ot v\ot 1_C)\cdot (1_A\ot v'\ot 1_C)$, that is $E(v\ot v')=v\cdot v'$. Also, because of (\ref{ajut1}), 
(\ref{ajut2}), (\ref{ajut3}), we can define the linear maps $R_1:V\ot A\rightarrow A\ot V$, $R_2:C\ot V\rightarrow V\ot C$, 
$R_3:C\ot A\rightarrow A\ot C$, such that $R_1(v\ot a)=v\cdot a$, $R_2(c\ot v)=c\cdot v$, $R_3(c\ot a)=c\cdot a$, 
for all $a\in A$, $v\in V$, $c\in C$. With this notation, in $A\ot V\ot C$ we have $v\cdot a=a_{R_1}\cdot v_{R_1}$, 
$c\cdot v=v_{R_2}\cdot c_{R_2}$, $c\cdot a=a_{R_1}\cdot c_{R_1}$, for all $a\in A$, $v\in V$, $c\in C$. Also, if we denote 
$E(v\ot v')=E_A(v, v')\ot E_V(v, v')\ot E_C(v, v')$, then we have 
$v\cdot v'=E_A(v, v')\cdot E_V(v, v')\cdot E_C(v, v')$.

First we need to prove that we have indeed a two-sided crossed product $A\gsl V\trl C$. 
We prove the relation (\ref{equiv4}). Let $a\in A$ and $v, v'\in V$. In the algebra $(A\ot V\ot C, \cdot )$ we have 
$v\cdot (v'\cdot a)=(v\cdot v')\cdot a$. We compute: 
\begin{eqnarray*}
v\cdot (v'\cdot a)&=&v\cdot (a_{R_1}\cdot v'_{R_1})=(v\cdot a_{R_1})\cdot v'_{R_1}\\
&=&((a_{R_1})_{r_1}\cdot v_{r_1})\cdot v'_{R_1}=(a_{R_1})_{r_1}\cdot (v_{r_1}\cdot v'_{R_1})\\
&=&(a_{R_1})_{r_1}\cdot E_A(v_{r_1}, v'_{R_1})\cdot E_V(v_{r_1}, v'_{R_1})\cdot E_C(v_{r_1}, v'_{R_1})\\
&=&((a_{R_1})_{r_1}E_A(v_{r_1}, v'_{R_1}))\cdot E_V(v_{r_1}, v'_{R_1})\cdot E_C(v_{r_1}, v'_{R_1})\\
&=&(a_{R_1})_{r_1}E_A(v_{r_1}, v'_{R_1}))\ot E_V(v_{r_1}, v'_{R_1})\ot E_C(v_{r_1}, v'_{R_1}),
\end{eqnarray*}
\begin{eqnarray*}
(v\cdot v')\cdot a&=&E_A(v, v')\cdot E_V(v, v')\cdot E_C(v, v')\cdot a\\
&=&E_A(v, v')\cdot E_V(v, v')\cdot a_{R_3} \cdot E_C(v, v')_{R_3}\\
&=&E_A(v, v')\cdot (a_{R_3})_{R_1}\cdot  E_V(v, v')_{R_1} \cdot E_C(v, v')_{R_3}\\
&=&(E_A(v, v')(a_{R_3})_{R_1})\cdot  E_V(v, v')_{R_1} \cdot E_C(v, v')_{R_3}\\
&=&E_A(v, v')(a_{R_3})_{R_1}\ot  E_V(v, v')_{R_1} \ot E_C(v, v')_{R_3},
\end{eqnarray*}
so (\ref{equiv4}) is satisfied. 
The other relations in Theorem \ref{main} are proved in a similar way, for instance for (\ref{equiv1}) one uses $v\cdot (aa')=(v\cdot a)\cdot a'$, 
for (\ref{equiv2}) one uses $(cc')\cdot v=c\cdot (c'\cdot v)$, for (\ref{equiv3}) one uses $c\cdot (v\cdot a)=(c\cdot v)\cdot a$, 
for (\ref{equiv5}) one uses $(c\cdot v)\cdot v'=c\cdot (v\cdot v')$, and for (\ref{equiv6}) one uses 
$(v\cdot v')\cdot v''=v\cdot (v'\cdot v'')$, for some elements $a, a'\in A$, $v, v', v''\in V$ and $c, c'\in C$.

Thus, the hypotheses of Theorem \ref{main} are satisfied, so we can consider the two-sided crossed product 
afforded by the maps $R_1$, $R_2$, $R_3$, $E$. The only thing left to prove is that the original multiplication $\cdot $ 
of $A\ot V\ot C$ coincides with the one of the two-sided crossed product. For $a, a'\in A$, $v, v'\in V$ and $c, c'\in C$ 
we compute:\\[2mm]
${\;\;\;\;\;\;}$$(a\ot v\ot c)\cdot (a'\ot v'\ot c')$
\begin{eqnarray*}
&=&a\cdot v\cdot c\cdot a'\cdot v'\cdot c'=a\cdot v\cdot a'_{R_3}\cdot c_{R_3}\cdot v'\cdot c'\\
&=&a\cdot (a'_{R_3})_{R_1}\cdot v_{R_1}\cdot v'_{R_2}\cdot (c_{R_3})_{R_2}\cdot c'\\
&=&a(a'_{R_3})_{R_1}\cdot E_A(v_{R_1}, v'_{R_2})\cdot E_V(v_{R_1}, v'_{R_2})\cdot E_C(v_{R_1}, v'_{R_2})
\cdot (c_{R_3})_{R_2}\cdot c'\\
&=&a(a'_{R_3})_{R_1}E_A(v_{R_1}, v'_{R_2})\cdot E_V(v_{R_1}, v'_{R_2})\cdot E_C(v_{R_1}, v'_{R_2})
(c_{R_3})_{R_2}c'\\
&=&a(a'_{R_3})_{R_1}E_A(v_{R_1}, v'_{R_2})\ot E_V(v_{R_1}, v'_{R_2})\ot E_C(v_{R_1}, v'_{R_2})
(c_{R_3})_{R_2}c', 
\end{eqnarray*}
and this is indeed the multiplication of $A\gsl V\trl C$. 
\end{proof}

Two-sided crossed products have the following universal property, generalizing the universal property of iterated twisted tensor products of algebras (\cite{jlpvo}, Theorem 2.7): 
\begin{theorem}
Let $A\gsl V\trl C$ be a two-sided crossed product afforded by the maps $R_1$, $R_2$, $R_3$, $E$, let $(X, \mu _X, 1_X)$ 
be an associative unital algebra, and $f_A:A\rightarrow X$, $f_V:V\rightarrow X$, $f_C:C\rightarrow X$ linear maps such that 
$f_A$ and $f_C$ are unital algebra maps and the following conditions are satisfied: 
\begin{eqnarray*}
&&\mu _X\circ (f_C\ot f_V\ot f_A)=\mu _X\circ (f_A\ot f_V\ot f_C)\circ (id_A\ot R_2)\circ (R_3\ot id_B)\circ (id_C\ot R_1), \\
&&\mu _X\circ (f_A\ot f_V\ot f_C)\circ E=\mu _X\circ (f_V\ot f_V).
\end{eqnarray*}
Then there exists a unique algebra map $f:A\gsl V\trl C\rightarrow X$ such that $f(a\gsl 1_V\trl 1_C)=f_A(a)$, 
$f(1_A\gsl v\trl 1_C)=f_V(v)$ and $f(1_A\gsl 1_V\trl c)=f_C(c)$, for all $a\in A$, $v\in V$, $c\in C$. 
\end{theorem}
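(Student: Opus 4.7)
The plan is to define $f$ by the only formula compatible with the prescribed boundary values and multiplicativity, namely
\[
f(a\gsl v\trl c):=f_A(a)\,f_V(v)\,f_C(c).
\]
Uniqueness is immediate: the factorization
$a\gsl v\trl c=(a\gsl 1_V\trl 1_C)(1_A\gsl v\trl 1_C)(1_A\gsl 1_V\trl c)$,
a direct check from the explicit multiplication of $A\gsl V\trl C$ in \thref{main} combined with axioms (ii), (iii), (iv) and the normalizations of $R_1,R_2,R_3$, shows that the three coordinate copies of $A$, $V$, $C$ generate $A\gsl V\trl C$, so any unital algebra map obeying the prescribed values must equal this $f$. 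Unitality of $f$ additionally forces $f_V(1_V)=1_X$, which we take as tacit in the hypotheses.

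Next I would extract from the first hypothesis three clean partial commutation rules in $X$. Specializing its element version at $v=1_V$, $a=1_A$ and $c=1_C$ respectively, and collapsing the iterated subscripts using (ii), (iii), (iv) and (\ref{twR31}), yields
\[
f_C(c)f_A(a)=f_A(a_{R_3})f_C(c_{R_3}),\qquad f_C(c)f_V(v)=f_V(v_{R_2})f_C(c_{R_2}),\qquad f_V(v)f_A(a)=f_A(a_{R_1})f_V(v_{R_1}).
\]
The second hypothesis translates on elements to $f_V(v)f_V(v')=f_A(E_A(v,v'))\,f_V(E_V(v,v'))\,f_C(E_C(v,v'))$.

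Existence then follows by a straight-line calculation. Starting from
$f(a\gsl v\trl c)\cdot f(a'\gsl v'\trl c')=f_A(a)f_V(v)f_C(c)f_A(a')f_V(v')f_C(c')$,
I apply the three partial rules in sequence --- push $f_A(a')$ leftward through $f_C(c)$, then through $f_V(v)$, and then push $f_V(v')$ leftward through the freshly produced $f_C(c_{R_3})$ --- to reach
$f_A(a(a'_{R_3})_{R_1})\,f_V(v_{R_1})f_V(v'_{R_2})\,f_C((c_{R_3})_{R_2}c')$.
Finally, applying the $E$-identity to $f_V(v_{R_1})f_V(v'_{R_2})$ and invoking multiplicativity of $f_A$ and $f_C$ recovers precisely the image under $f$ of the product formula stated at the end of \thref{main}.

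The main obstacle is the index bookkeeping needed to derive the three partial rules: one has to check that, upon each unit substitution, the nested subscripts $((-)_{R_1})_{R_3}$, $(v_{R_1})_{R_2}$ and $(c_{R_3})_{R_2}$ collapse correctly via the normalization axioms. Once these three ``move'' rules and the $E$-identity are in place, multiplicativity is a purely formal concatenation that mirrors the product formula of $A\gsl V\trl C$.
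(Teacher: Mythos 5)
Your proposal is correct and follows essentially the same route as the paper, which simply defines $f(a\gsl v\trl c)=f_A(a)f_V(v)f_C(c)$ and leaves the verification (the three commutation rules obtained by specializing the braided hypothesis at units, plus the $E$-identity) to the reader, citing the analogous argument in \cite{jlpvo}. Your observation that unitality of $f$ tacitly requires $f_V(1_V)=1_X$ is a fair and accurate remark about the hypotheses.
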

\begin{proof}
Similar to the one is \cite{jlpvo}; $f$ is uniquely defined by $f(a\gsl v\trl c)=f_A(a)f_V(v)f_C(c)$, details are left to the reader. 
\end{proof}

We consider now two particular situations. 
\begin{remark} {\em 
Let $A\gsl V\trl C$ be a two-sided crossed product afforded by the maps $R_1$, $R_2$, $R_3$, $E$ and assume that $R_1$ is the flip map, $R_1(v\ot a)=a\ot v$ for all $a\in A$, $v\in V$. Through the canonical linear isomorphism $A\ot V\ot C\simeq V\ot A\ot C$ we transfer the algebra structure of $A\gsl V\trl C$ to $V\ot (A\ot C)$, where it becomes 
\begin{eqnarray*}
&&(v\ot (a\ot c))*(v'\ot (a'\ot c'))=E_V(v, v'_{R_2})\ot (aa'_{R_3}E_A(v, v'_{R_2})\ot E_C(v, v'_{R_2})(c_{R_3})_{R_2}c').
\end{eqnarray*}
Define the linear maps (for all $a\in A$, $v, v'\in V$, $c\in C$)
\begin{eqnarray*}
&&P:(A\ot _{R_3}C)\ot V\rightarrow V\ot (A\ot _{R_3}C), \;\;\; P((a\ot c)\ot v)=v_{R_2}\ot (a\ot c_{R_2}), \\
&&\nu :V\ot V\rightarrow V\ot (A\ot _{R_3}C), \;\;\;
\nu (v\ot v')=E_V(v, v')\ot (E_A(v, v') \ot E_C(v, v')).
\end{eqnarray*}
 Then, since we have $(v\ot (a\ot c))*(1_V\ot (a'\ot c'))=v\ot (a\ot c)\cdot (a'\ot c')$, for all $a, a'\in A$, $v\in V$, $c, c'\in C$, 
 where the multiplication $\cdot $ is the 
one of the twisted tensor product $A\ot _{R_3}C$, it turns out that $*$ is just the multiplication of the crossed product $V\overline{\otimes}_{P, \nu }(A\ot _{R_3}C)$. }
\end{remark}
\begin{remark} \label{rem2} {\em 
Let again $A\gsl V\trl C$ be a two-sided crossed product afforded by the maps $R_1$, $R_2$, $R_3$, $E$ and assume this time that $R_3$ is the flip map, $R_3(c\ot a)=a\ot c$ for all $a\in A$, $c\in C$. Consider again the canonical linear isomorphism $A\ot V\ot C\simeq V\ot A\ot C$ and transfer the algebra structure of $A\gsl V\trl C$ to $V\ot (A\ot C)$, where it becomes 
\begin{eqnarray*}
&&(v\ot (a\ot c))\bullet (v'\ot (a'\ot c'))=E_V(v_{R_1}, v'_{R_2})\ot (aa'_{R_1}E_A(v_{R_1}, v'_{R_2})\ot E_C(v_{R_1}, v'_{R_2})c_{R_2}c').
\end{eqnarray*}
For $a, a'\in A$, $v\in V$, $c, c'\in C$ we have $(v\ot (a\ot c))\bullet (1_V\ot (a'\ot c'))=v_{R_1}\ot (aa'_{R_1}\ot cc')$; in general, this is different from $v\ot (a\ot c)(a'\ot c')$, therefore in general the multiplication $\bullet $ cannot be written as the multiplication of some crossed product $V\overline{\otimes}_{P, \nu }(A\ot C)$ (here $A\ot C$ is the ordinary tensor product algebra between $A$ and $C$). 

Define the linear maps (for all $a\in A$, $v, v'\in V$, $c\in C$)
\begin{eqnarray*}
&&J:(A\ot C)\ot V\rightarrow V\ot (A\ot C), \;\;\; J((a\ot c)\ot v)=v_{R_2}\ot (a\ot c_{R_2}), \\
&&T:V\ot (A\ot C)\rightarrow V\ot (A\ot C), \;\;\; T(v\ot (a\ot c))=v_{R_1}\ot (a_{R_1}\ot c), \\
&&\gamma :V\ot V\rightarrow V\ot V\ot (A\ot C), \;\;\; \gamma (v\ot v')=v\ot v'\ot (1_A\ot 1_C), \\
&&\eta :V\ot V\rightarrow V\ot (A\ot C)\ot (A\ot C), \\
&& \eta (v\ot v')=E_V(v, v')\ot (1_A\ot E_C(v, v'))\ot (E_A(v, v')\ot 1_C). 
\end{eqnarray*}
Then one can check that these maps satisfy the conditions (1.1)-(1.13) in \cite{pan}, so we have the L-R-crossed product 
$V\ot _{J, T, \gamma , \eta }(A\ot C)$ (again $A\ot C$ is the ordinary tensor product algebra between $A$ and $C$). Its multiplication is (with notation as in \cite{pan})\\[2mm]
${\;\;\;\;}$$(v\ot (a\ot c))(v'\ot (a'\ot c'))$
\begin{eqnarray*}
&=&\eta _1(\gamma _1(v, v')_T, \gamma _2(v, v')_J)\\[1mm]
&&\;\;\;\;\;\;\;\;\;\;\;\;\;\;\ot 
\eta _2(\gamma _1(v, v')_T, \gamma _2(v, v')_J)(a\ot c)_J\gamma _3(v, v')(a'\ot c')_T
\eta _3(\gamma _1(v, v')_T, \gamma _2(v, v')_J)\\
&=&\eta _1(v_T, v'_J)\ot \eta _2(v_T, v'_J)(a\ot c)_J(a'\ot c')_T\eta _3(v_T, v'_J)\\
&=&\eta _1(v_{R_1}, v'_{R_2})\ot \eta _2(v_{R_1}, v'_{R_2})(a\ot c_{R_2})(a'_{R_1}\ot c')\eta _3(v_{R_1}, v'_{R_2})\\
&=&E_V(v_{R_1}, v'_{R_2})\ot (1_A\ot E_C(v_{R_1}, v'_{R_2}))(a\ot c_{R_2})(a'_{R_1}\ot c')
(E_A(v_{R_1}, v'_{R_2})\ot 1_C)\\
&=&E_V(v_{R_1}, v'_{R_2})\ot (aa'_{R_1}E_A(v_{R_1}, v'_{R_2})\ot E_C(v_{R_1}, v'_{R_2})c_{R_2}c'), 
\end{eqnarray*}
thus the multiplication $\bullet $ is the multiplication of the L-R-crossed product $V\ot _{J, T, \gamma , \eta }(A\ot C)$.
}
\end{remark}
%%%%%%%%%%%%%%%%%%%%%%%%%%%%%%%
\section{Examples}\selabel{3}
%%%%%%%%%%%%%%%%%%%%%%%%%%%%%%%
\setcounter{equation}{0}
%%%%%%%%%%%%%%%%%%%%%%%%%%%%
\begin{example} {\em 
We recall from \cite{jlpvo} the construction of iterated twisted tensor 
products of algebras. Let $A$, $B$, $C$ be associative unital algebras, 
$R_1:B\ot A\rightarrow A\ot B$, $R_2:C\ot B\rightarrow B\ot C$, 
$R_3:C\ot A\rightarrow A\ot C$ twisting maps  satisfying the braid (or hexagon) 
equation 
\begin{eqnarray*}
&&(id_A\ot R_2)\circ (R_3\ot id_B)\circ (id_C\ot R_1)=
(R_1\ot id_C)\circ (id_B\ot R_3)\circ (R_2\ot id_A). 
\end{eqnarray*}
Then we have an algebra structure on $A\ot B\ot C$ (called the iterated twisted 
tensor product) with unit $1_A\ot 1_B\ot 1_C$ and multiplication 
\begin{eqnarray}
&&(a\ot b\ot c)(a'\ot b'\ot c')=a(a'_{R_3})_{R_1}\ot b_{R_1}b'_{R_2}\ot 
(c_{R_3})_{R_2}c'. \label{multiterttp}
\end{eqnarray}
Define the linear mp $E:B\ot B\rightarrow A\ot B\ot C$, $E(b\ot b')=1_A\ot bb'\ot 1_C$, for all $b, b'\in B$. Then one can easily check that we have a two-sided crossed product $A\gsl B\trl C$ afforded by the maps $R_1$, $R_2$, $R_3$, $E$ and its multiplication is given by (\ref{multiterttp}). So, the iterated twisted tensor product of algebras is a particular case of the two-sided crossed product. 
}
\end{example}
\begin{example} {\em 
We recall from \cite{hn}, \cite{bpvo} the construction of the two-sided crossed product over a quasi-bialgebra (we use notation as in \cite{bpvo}). Let $H$ be 
a quasi-bialgebra, $\mathfrak{A}$ a right $H$-comodule algebra, $\mathfrak{B}$ a 
left $H$-comodule algebra and $\mathcal{A}$ an $H$-bimodule algebra. Then  
$\mathfrak{A}\ot \mathcal{A}\ot \mathfrak{B}$ becomes an associative unital algebra, with unit $1_{\mf {A}}\ot 1_{{\cal A}} \ot 1_{\mf {B}}$
and multiplication 
\begin{eqnarray}
&&\hspace*{-1.5cm} 
(\mathfrak{a}\ot \varphi \ot
\mathfrak{b})(\mathfrak{a}'\ot \varphi ' \ot \mathfrak{b}')\nonumber\\
&=&\mathfrak{a}\mathfrak{a}'_{<0>}\tilde{x} ^1_{\rho }\ot (\tilde{x}
^1_{\lambda }\cdot \varphi \cdot   
\mathfrak{a}'_{<1>}\tilde{x} ^2_{\rho })(\tilde{x} ^2_{\lambda }\mathfrak{b}_{[-1]}\cdot  \varphi ' 
\cdot \tilde{x} ^3_{\rho })\ot \tilde{x} ^3_{\lambda }\mathfrak{b}_{[0]}\mathfrak{b}' ,\label{gtscp}
\end{eqnarray}
for all $\mf {a}, \mf {a}'\in \mf {A}$, $\mf {b}, \mf
{b}'\in \mf {B}$ and $\v, \v ' \in {\cal A}$.  One can easily check that the hypotheses of Theorem \ref{converse} are satisfied, so we obtain from Theorem \ref{converse} linear maps $R_1$, $R_2$, $R_3$, $E$ which can be easily seen to be defined by the following formulae:
\begin{eqnarray*}
&&R_1:\mathcal{A}\ot \mathfrak{A}\rightarrow \mathfrak{A}\ot \mathcal{A}, \;\;\;R_1(\varphi \ot \mathfrak{a})=\mathfrak{a}_{<0>}\ot \varphi \cdot \mathfrak{a}_{<1>}, \\
&&R_2:\mathfrak{B}\ot \mathcal{A}\rightarrow \mathcal{A}\ot \mathfrak{B}, \;\;\;R_2(\mathfrak{b}\ot \varphi )=b_{[-1]}\cdot \varphi \ot b_{[0]}, \\
&&R_3:\mathfrak{B}\ot \mathfrak{A}\rightarrow \mathfrak{A}\ot \mathfrak{B}, \;\;\;R_3(\mathfrak{b}\ot \mathfrak{a})=\mathfrak{a}\ot \mathfrak{b}, \\
&&E:\mathcal{A}\ot \mathcal{A}\rightarrow \mathfrak{A}\ot \mathcal{A}\ot \mathfrak{B}, \;\;\;
E(\varphi \ot \varphi ')=\tilde{x} ^1_{\rho }\ot (\tilde{x}
^1_{\lambda }\cdot \varphi \cdot   
\tilde{x} ^2_{\rho })(\tilde{x} ^2_{\lambda }\cdot  \varphi ' 
\cdot \tilde{x} ^3_{\rho })\ot \tilde{x} ^3_{\lambda },
\end{eqnarray*}
for all $\mf {a}\in \mf {A}$, $\v, \v ' \in {\cal A}$ and 
$\mf {b}\in \mf {B}$, and moreover the two-sided crossed product over $H$ coincides with the two-sided crossed product (in the sense of this paper) 
$\mf {A}\gsl {\cal A}\trl \mf {B}$ afforded by the maps $R_1$, $R_2$, $R_3$, $E$. 

Note that $R_3$ is the flip map, so, by Remark \ref{rem2}, $\mf {A}\gsl {\cal A}\trl \mf {B}$ is isomorphic to an L-R-crossed product 
$\mathcal{A}\nat _{J, T, \gamma , \eta }(\mathfrak{A}\ot \mathfrak{B})$. On the other hand, we know from \cite{pvo}, Proposition 2.5, that $\mf {A}\gsl {\cal A}\trl \mf {B}$ is isomorphic to the L-R-smash product $\mathcal{A}\nat (\mathfrak{A}\ot \mathfrak{B})$. 
So, Remark \ref{rem2} is actually a generalization of \cite{pvo}, Proposition 2.5.  
}
\end{example}
\begin{example} {\em 
We recall the following construction from \cite{ma}, called as well a two-sided crossed product. Let $(H, \Delta , \varepsilon )$ be a coassociative counital coalgebra endowed with a distinguished element $1_H\in H$ suct that $\Delta (1_H)=1_H\ot 1_H$, let 
$(A, \mu _A, 1_A)$ and $(B, \mu _B, 1_B)$ be two associative unital algebras and assume that we are given linear maps 
\begin{eqnarray*}
&&G:H\ot H\rightarrow A\ot H, \;\;\; h\ot h'\mapsto h^G\ot h'_G, \\
&&R:H\ot A\rightarrow A\ot H, \;\;\; h\ot a\mapsto a_R\ot h_R, \\
&&T:B\ot H\rightarrow H\ot B, \;\;\;b\ot h\mapsto h_T\ot b_T, \\
&& \tau :H\ot H\rightarrow B, 
\end{eqnarray*}
such that several conditions are satisfied. Then $A\ot H\ot B$ becomes an associative algebra with unit $1_A\ot 1_H\ot 1_B$ and multiplication 
\begin{eqnarray*}
&&(a\ot h\ot b)(a'\ot h'\ot b')=aa'_R((h_R)_1)^G\ot ((h'_T)_1)_G\ot \tau ((h_R)_2, (h'_T)_2)b_Tb'. 
\end{eqnarray*}
Then one can check that the hypotheses of Theorem \ref{converse} are satisfied, so we obtain the linear maps $R_1=R$, $R_2=T$, $R_3=$ flip map, $R_3(b\ot a)=a\ot b$, for all $a\in A$, $b\in B$, and 
\begin{eqnarray*}
E:H\ot H\rightarrow A\ot H\ot B, \;\;\; E(h\ot h')=(h_1)^G\ot (h'_1)_G\ot \tau (h_2, h'_2), \;\;\; \forall \; h, h'\in H. 
\end{eqnarray*}
So, the two-sided crossed product in \cite{ma} coincides with the two-sided crossed product $A\gsl H\trl B$ afforded by the maps 
$R_1$, $R_2$, $R_3$, $E$.
}
\end{example}

%%%%%%%%%%%%%%%%%%%%%%%%%%%%%%%
\begin{center}
ACKNOWLEDGEMENTS
\end{center}
The author was partially supported by a grant from UEFISCDI,
project number PN-III-P4-PCE-2021-0282.

%%%%%%%%%%%%%%%%%%%%%%%%

\end{document}